\newcommand{\gw}{\omega}
\newcommand{\gs}{\sigma}
\newcommand{\coll}{\mathrm{Coll}}
\newcommand{\cantor}{2^\gw}
\newcommand{\rng}{\mathrm{rng}}
\newcommand{\power}{\mathcal{P}}
\newtheorem{theorem}{Theorem}[section]
\newtheorem{claim}[theorem]{Claim}
\newtheorem{corollary}[theorem]{Corollary}
\theoremstyle{definition}
\newtheorem{definition}[theorem]{Definition}
\title{Sequential topologies and Dedekind finite sets\footnote{2010 AMS subject classification 03E15, 03E25, 03E35.}}
\author{
Jind{\v r}ich Zapletal\\
University of Florida}
\begin{document}
\maketitle

\begin{abstract}
It is consistent with ZF set theory that the Euclidean topology on $\mathbb{R}$ is not sequential, yet every infinite set of reals contains a countably infinite subset. This answers a question of Gutierres.
\end{abstract}

\section{Introduction}

This note deals with sequentiality of the Euclidean topology of the reals in choiceless context. To remind the reader of the standard definitions:

\begin{definition}
Let $X$ be a topological space. 

\begin{enumerate}
\item A set $C\subset X$ is \emph{sequentially closed} if every countable converging sequence of elements of $C$ converges to an element of $C$.
\item The space $X$ is \emph{sequential} if every sequentially closed subset of $X$ is closed.
\end{enumerate}
\end{definition}

\noindent Under small fragments of the axiom of choice, such as the axiom of dependent choices (DC), it is easy to check the status of sequentiality of basic topological spaces. For example, all metric spaces are sequential, while $\gw_1+1$ with the order topology is not. Without the axiom of dependent choices, sequentiality becomes an issue even in the most basic contexts. Gutierres \cite{gutierres:sequential} showed that in ZF, sequentiality of the Euclidean topology of $\mathbb{R}$ has many equivalent restatements, and it implies that every infinite subset of $\mathbb{R}$ contains a countably infinite subset. He asked whether the opposite implication is provable in ZF. In this note, I show that this is not the case.

\begin{theorem}
\label{1theorem}
It is consistent relative to an inaccessible cardinal that ZF holds,  every infinite subset of $\mathbb{R}$ contains a countably infinite subset, and the Euclidean topology of $\mathbb{R}$ is not sequential.
\end{theorem}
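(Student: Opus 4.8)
The plan is to realise the model as a symmetric extension of a Solovay model. First I would fix $V\models\mathrm{ZFC}$ with an inaccessible cardinal $\kappa$, force with the L\'evy collapse $\colk$ to obtain $V[g]$, and let $W\subseteq V[g]$ be the Solovay model --- the class of sets hereditarily definable in $V[g]$ from a real and ordinal parameters. Then $W\models\mathrm{ZF}+\mathrm{DC}$ and in $W$ every set of reals is Lebesgue measurable, has the Baire property, and has the perfect set property; in particular every infinite set of reals in $W$ is Dedekind-infinite. Over $W$ I will force, symmetrically, to adjoin an $\gw$-sequence of perfect sets that destroys sequentiality, arranging the forcing to be tame enough that the perfect set property survives.

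The geometry behind the failure of sequentiality is elementary and I would record it first. Suppose $M\models\mathrm{ZF}$ and $\langle D_n:n\in\gw\rangle\in M$ is a sequence of pairwise disjoint nonempty compact subsets of $\R$ with $D_n\subseteq(2^{-n-1},2^{-n})$ such that $M$ contains no \emph{infinite partial choice function} for the sequence, i.e.\ no $g$ with infinite domain $S\subseteq\gw$ and $g(n)\in D_n$ for all $n\in S$. Put $C=\bigcup_nD_n$. Then in $M$ the set $C$ is sequentially closed but not closed: plainly $0\in\overline C\setminus C$; and if $\langle x_k:k\in\gw\rangle$ is a sequence from $C$ converging to some $y\in\R$, then the set $S=\{n:(\exists k)\,x_k\in D_n\}$ must be finite --- otherwise $n\mapsto x_{k(n)}$ with $k(n)=\min\{k:x_k\in D_n\}$ is an infinite partial choice function definable in $M$ --- so $\langle x_k\rangle$ lies in the compact set $\bigcup_{n\in S}D_n\subseteq C$ and $y\in C$. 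Hence it suffices to produce over $W$ a symmetric extension $N\models\mathrm{ZF}$ that contains such a sequence $\langle D_n\rangle$ and in which every infinite set of reals is still Dedekind-infinite.

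To this end I would take $P=\bigoplus_{n\in\gw}Q_n$, the finite-support product of posets $Q_n$, where $Q_n$ consists of finite approximations to a monotone splitting labelling $c_n\colon\bintree\to\gwtree$; the generic labelling induces a continuous injection $e_n\colon\cantor\to(2^{-n-1},2^{-n})$, and one sets $D_n=e_n[\cantor]$, with $Q_n$ arranged so that $D_n$ is a perfect set, $D_n\cap\R^W=\emptyset$, and no point of $D_n$ has an $H_n$-invariant name, where $H_n$ is the group of automorphisms of $Q_n$ induced by the tree automorphisms of $\bintree$ reindexing $c_n$. Let $\GG$ be the finite-support product of the $H_n$, with the usual filter of subgroups fixing all but finitely many coordinates, and let $N$ be the resulting symmetric submodel of $W[G]$. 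Two symmetry computations then finish the combinatorial side. First, every tree automorphism of $\bintree$ permutes the branches of $\cantor$ and hence fixes $D_n=e_n[\cantor]$ \emph{as a set}, so the canonical name for $\langle D_n:n\in\gw\rangle$ is $\GG$-invariant; thus $\langle D_n\rangle\in N$ and $C=\bigcup_nD_n\in N$. Second, a name in $N$ for an infinite partial choice function for $\langle D_n\rangle$ has a finite support $F$, and for each $n\notin F$ in its domain it would have to name a point of $D_n$ fixed by all of $H_n$, which is impossible by the choice of $Q_n$. So $N$ has no infinite partial choice function for $\langle D_n\rangle$, and by the previous paragraph the Euclidean topology on $\R$ is not sequential in $N$ (so, as it must, $\mathrm{DC}$ fails there; note that the full extension $W[G]$ satisfies $\mathrm{DC}$, so the passage to $N$ is essential).

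It remains to see that every infinite set of reals in $N$ is Dedekind-infinite. By the reductions above it is enough that $N$ satisfies the perfect set property for \emph{all} sets of reals, since in $\mathrm{ZF}$ every perfect set --- being the injective continuous image of $\cantor$ along a canonically chosen Cantor scheme --- has a countably infinite subset. This is the technical heart, and I would obtain it by a Solovay-style homogeneity-and-absoluteness argument adapted to $P$: every set of reals of $N$ is, inside $W[G]$, definable from a real, ordinals, and the symmetric generic, and, using the homogeneity and definability of the product $P$ together with the inaccessibility of $\kappa$ in $V$ (so that one may re-collapse below $\kappa$ as in Solovay's theorem), such a set is either countable or contains a perfect subset lying in $N$. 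The hard part will be precisely this preservation step: one must check that the new reals contributed by the $D_n$ and by the symmetric generic can never combine into a set of reals without the perfect set property --- that is, that $P$ is tame enough over $W$ for the Solovay absoluteness argument to survive. Granting it, $N$ witnesses Theorem~\ref{1theorem}.
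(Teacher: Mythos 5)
Your reduction lemma is vacuous, and this sinks the plan at the first step. In ZF, $\mathbb{R}$ is Dedekind complete, so every nonempty closed bounded subset of $\mathbb{R}$ contains its infimum; hence for any sequence $\langle D_n:n\in\gw\rangle$ of nonempty compact subsets of $\mathbb{R}$, the map $n\mapsto\min D_n$ is a \emph{total} choice function, definable without any use of choice from the sequence itself. No model of ZF can therefore contain a sequence of pairwise disjoint nonempty compact sets of reals admitting no infinite partial choice function: the hypothesis of your key lemma is unsatisfiable. (This is the same reason the ``socks'' in Cohen-style symmetric models must be pairs of \emph{sets of} reals rather than pairs of reals.) Concretely, in your proposed $N$ the name for $\min D_n$ is definable from the $\GG$-invariant name for the set $D_n$ and is therefore itself invariant, contradicting your stipulation that no point of $D_n$ have an $H_n$-invariant name. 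Nor can you escape by relaxing compactness: the $D_n$ live in $(2^{-n-1},2^{-n})$, so they are bounded, and you need $\bigcup_{n\in S}D_n$ closed in $\mathbb{R}$ to conclude $y\in C$; closed and bounded already guarantees the minimum exists.

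The paper's witness to non-sequentiality is of an entirely different nature precisely to avoid this obstruction: it is a \emph{dense} set $A\subseteq[0,1]$ with dense complement, the union of the first coordinates of a generic filter on a Suslin poset of pairs $\langle a,b\rangle$ with $a$ closed nowhere dense and $b$ countable disjoint from $a$. Sequential closedness of $A$ is obtained not from a failure of choice among compact pieces but from the fact that this poset is balanced over the choiceless Solovay model and hence adds no new $\gw$-sequences of reals, so any convergent sequence from $A$ is already decided by a single condition. Moreover, the step you explicitly defer as ``the technical heart'' --- that in the final model every infinite set of reals contains a countably infinite subset --- is the main content of the paper, proved there by a mutual-genericity argument using a sequence of filters $H_n$ converging to a filter $H_\gw$; your sketch offers no argument for it, and the perfect-set-property route you gesture at is both stronger than needed and unsubstantiated for the symmetric extension you describe.
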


\noindent In the interests of brevity, the theorem understates the understanding of its associated model somewhat. In particular, in it every infinite set contains a countably infinite subset, and no uncountable Polish space is sequential. The inaccessible cardinal assumption is used only to make the construction fit under the umbrella of geometric set theory \cite{z:geometric}; I do not know if it is necessary. It is also unclear if it is possible to distinguish between sequentiality of various uncountable Polish spaces; one rather egregious example is $\mathbb{R}$ and $\mathbb{R}^2$.

The paper uses standard set theoretic notation as in \cite{jech:newset}; in matters of geometric set theory it follows \cite{z:geometric}.

\section{Proof of the main theorem}

The model for Theorem~\ref{1theorem} is a forcing extension of the classical choiceless Solovay model by a certain Suslin poset which is designed in a straightforward way to add a witness for the failure of sequentiality of $\mathbb{R}$.

\begin{definition}
The poset $P$ consists of all pairs $p=\langle a_p, b_p\rangle$ where $a_p\subset [0, 1]$ is a nowhere dense closed set and $b_p\subset [0, 1]$ is a countable set disjoint from $a_p$. The ordering is defined by $q\leq p$ if $a_p\subset a_q$ and $b_p\subset b_q$. The $P$-name $\dot A$ is defined as the union of all first coordinates of conditions in the generic filter.
\end{definition}

\noindent I make a couple of simple initial observations. First, conditions $p, q\in P$ are compatible if and only if $a_p\cap b_q=0$ and $a_q\cap b_p=0$. In such a case, there is a largest common lower bound of $p, q$, namely the condition $\langle a_p\cup a_q, b_p\cup b_q\rangle$. It follows immediately that the poset $P$ is Suslin. Since any point not in $b_p$ can be added to $a_p$ and any point not in $a_p$ can be added to $b_p$ obtaining a condition stronger than $p$, it also follows that $P$ forces the set $\dot A\subset [0, 1]$ to be dense with dense complement; in particular, $\dot A$ is forced not to be closed.

The partial order $P$ is fairly useless in ZFC context. However, I will show that if $W$ is a choiceless Solovay model, then the $P$-extension of $W$ is a model for the theory required by Theorem~\ref{1theorem}. In order to do that, an analysis of its balanced virtual conditions as in \cite{z:geometric} is necessary. This analysis takes place in ZFC. For every nowhere dense closed set $a\subset [0, 1]$, let $\tau_a$ be the $\coll(\gw, [0, 1])$-name for the condition $\langle a, [0,1]\cap V\setminus a\rangle\in P$.

\begin{theorem}
\label{balancetheorem}
In the poset $P$:

\begin{enumerate}
\item for every nowhere dense closed set $a\subset [0, 1]$, the pair $\langle \coll(\gw, [0,1]), \tau_a\rangle$ is balanced;
\item for every balanced pair $\langle Q, \gs\rangle$ there is a nowhere dense closed set $a\subset [0,1]$ such that the balanced pairs $\langle Q, \gs\rangle$ and $\langle \coll(\gw, [0,1]), \tau_a\rangle$ are equivalent;
\item distinct closed sets yield inequivalent balanced pairs.
\end{enumerate}

\noindent In particular, the poset $P$ is balanced.
\end{theorem}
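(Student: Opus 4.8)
The plan is to verify the three clauses of Theorem~\ref{balancetheorem} directly from the combinatorics of $P$ recorded in the initial observations, using the general machinery of balanced pairs from \cite{z:geometric}. Throughout, the key structural fact is that two conditions $p,q$ are compatible exactly when $a_p\cap b_q=\emptyset=a_q\cap b_p$, and that a descending sequence of conditions can always be extended by throwing any given point into either coordinate; thus the first coordinate is the "positive" part of the condition and the second coordinate is a countable "negative" list.

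For clause (1), I would argue that $\langle\coll(\gw,[0,1]),\tau_a\rangle$ is balanced by the standard criterion: given two mutually generic extensions $V[G_0]$, $V[G_1]$ of the ground model and conditions $p_0\leq\tau_a/G_0$, $p_1\leq\tau_a/G_1$ in the respective extensions, I must show $p_0,p_1$ are compatible in $V[G_0,G_1]$. Since $a\subseteq a_{p_0}$ and $b_{p_0}$ is countable in $V[G_0]$, and by mutual genericity $b_{p_0}\cap V[G_1]$ is a subset of $[0,1]\cap V$ together with $\coll(\gw,[0,1])$-generic reals — here I use that $\tau_a$ forces $b$ to be exactly $[0,1]\cap V\setminus a$ at the moment it is interpreted, and that any further point added to $b_{p_0}$ lies outside $V[G_1]$ by mutual genericity, hence outside $a_{p_1}\subseteq[0,1]\cap V[G_1]$ — one gets $a_{p_1}\cap b_{p_0}=\emptyset$ and symmetrically $a_{p_0}\cap b_{p_1}=\emptyset$. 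Therefore the conditions are compatible, witnessing balance; the uniqueness/coherence requirements for a balanced virtual condition follow because the common lower bound $\langle a_{p_0}\cup a_{p_1}, b_{p_0}\cup b_{p_1}\rangle$ is canonically the largest one.

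For clause (2), given an arbitrary balanced pair $\langle Q,\gs\rangle$, I would first extract the closed set $a$. In a generic extension by $Q$, the condition $\gs$ is an actual condition $\langle a_\gs, b_\gs\rangle$; set $a$ to be... the point is that balance forces $a_\gs$ to be a \emph{ground model} nowhere dense closed set, because if $a_\gs$ contained a point $x$ not in $V$, then in a mutually generic copy one could find a condition placing $x$ into the second coordinate, contradicting compatibility across the two sides. So let $a$ be this ground-model closed set coded by $a_\gs$. Then I would check that $\langle Q,\gs\rangle$ and $\langle\coll(\gw,[0,1]),\tau_a\rangle$ are equivalent in the sense of \cite{z:geometric}, i.e. they have a common lower bound in the separative quotient of balanced pairs: any condition below $\gs$ and any condition below $\tau_a$, interpreted in mutually generic extensions, are compatible by exactly the argument of clause (1), using that both have first coordinate containing $a$ and second coordinate whose non-ground-model points are "fresh". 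Clause (3) is then immediate: if $a\neq a'$, pick without loss of generality $x\in a\setminus a'$; the condition $\tau_{a'}$ forces $x\in\dot A^c$ (indeed $x\in b$) while $\tau_a$ forces $x\in\dot A$, so these balanced pairs decide the statement $\check x\in\dot A$ oppositely and cannot be equivalent. The "in particular" is then just the observation that clause (1) exhibits a balanced pair below every condition (given $p$, take any nowhere dense closed $a\supseteq a_p$ with $a\cap b_p=\emptyset$), which is the definition of $P$ being balanced.

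The main obstacle I anticipate is clause (2): pinning down precisely why $a_\gs$ must be coded in the ground model, and why the second coordinate $b_\gs$, though it may genuinely contain non-ground-model reals, never interferes with compatibility against a mutually generic copy. This requires care with the definition of "virtual condition" and "equivalence of balanced pairs" in \cite{z:geometric} — in particular that equivalence is tested by compatibility of interpretations in mutually generic extensions rather than by literal equality — and a clean argument that mutual genericity makes any real appearing in $b_\gs$ but not in $V$ automatically avoid the (ground-model) set $a$ on the other side. Once the ground-model-coding of the first coordinate is established, the rest is a routine repetition of the compatibility computation from clause (1).
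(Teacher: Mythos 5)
Your proposal has the right overall architecture (classify balanced pairs by a ground-model nowhere dense closed set, test everything by compatibility in mutually generic extensions), but the two central steps are carried by claims that are false or unjustified, and these are exactly the points where the paper has to work.

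In clause (1), your compatibility computation rests on the assertion that $a_{p_1}\subseteq[0,1]\cap V[G_1]$, so that any point of $b_{p_0}$ outside $V[G_1]$ automatically avoids $a_{p_1}$. That inclusion is false: $a_{p_1}$ is a closed set \emph{coded} in $V[G_1]$, and when reinterpreted in $V[G_0][G_1]$ it may contain continuum many reals that do not belong to $V[G_1]$ (already a Cantor set coded in $V[G_1]$ does). So mutual genericity does not by itself rule out a new point of $b_{p_0}$ landing in $a_{p_1}$; this is precisely the nontrivial content of (1). The paper's proof handles it by a density argument: if some $\langle r_0,r_1\rangle$ forced a name $\theta$ for a member of $\dot b_0$ into $\dot a_1$, one takes a filter $g$ on $R_0$ generic over a countable elementary submodel, obtains a \emph{ground model} real $x=\theta/g\notin a$, hence $x\in[0,1]\cap V\setminus a\subseteq \dot b_1$, so some $r_1'\leq r_1$ forces $\dot a_1$ to miss a basic neighborhood $O$ of $x$, while some $r_0'\in g$ forces $\theta\in O$ — a contradiction. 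Your argument for the ground-model points of $b_{p_0}$ is fine, but the new points are the whole difficulty and your treatment of them does not work.

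In clause (2), you claim balance forces the first coordinate $a_\gs$ to be literally a ground-model closed set, arguing that otherwise "in a mutually generic copy one could find a condition placing $x$ into the second coordinate." But if $x\notin V$ then $x$ is not an element of the mutually generic side's model, so no such condition exists there; the argument only applies to ground-model points. The paper does not prove (and does not need) that $a_\gs$ is a ground-model set: it defines $a=\{x\in[0,1]\cap V:Q\Vdash\check x\in\theta\}$, shows $a$ is closed nowhere dense, and shows $Q$ forces $\gs$ to be \emph{compatible} with $\langle a,[0,1]\cap V\setminus a\rangle$ (the two halves being $Q\Vdash a\subseteq\theta$ and $Q\Vdash([0,1]\cap V\setminus a)\cap\theta=0$, the latter by the mutual-genericity trick applied to a ground-model $x$); equivalence then follows from the compatibility criterion of \cite[Proposition 5.2.6]{z:geometric}. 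Your clause (3) and the final "in particular" are essentially the paper's, and are fine. To repair the proposal you need to replace the $V[G_1]$-membership argument in (1) by a genuine density/elementary-submodel argument, and in (2) to work with the ground-model trace of $\theta$ and compatibility rather than with literal equality of the first coordinates.
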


\begin{proof}
For (1), suppose that $R_0, R_1$ are posets and $\gs_0, \gs_1$ are their respective names for conditions in $P$ which are stronger than $\langle a, [0,1]\cap V\setminus a\rangle$; I must show that $R_0\times R_1\Vdash\gs_0, \gs_1$ are compatible in $P$. Let $\gs_0=\langle\dot a_0, \dot b_0\rangle$ and $\gs_1=\langle\dot a_1, \dot b_1\rangle$.

\begin{claim}
$R_0\times R_1\Vdash\dot a_0\cap\dot b_1=0$ and $\dot a_1\cap\dot b_0=0$.
\end{claim}

\begin{proof}
I will prove the latter conjunct; the proof of the former is symmetric. Suppose towards a contradiction that $\theta$ is an $R_0$-name for an element of $\dot b_0$ and $r_0\in R_0$, $r_1\in R_1$ are conditions which force in the product that $\theta\in\dot a_1$ holds. Let $M$ be a countable elementary submodel of a large structure containing $r_0, \theta$ in particular. Let $g\subset R_0$ be a filter generic over the model $M$ containing the condition $r_0$ and let $x=\theta/g$. Note that $R_0\Vdash\dot b_0\cap a=0$; so, it must be the case that $x\notin a$. Now, $R_1\Vdash \check x\notin \dot a_1$; so, there has to be a condition $r'_1\leq r_1$ and a basic open neighborhood $O\subset [0, 1]$ of $x$ such that $r'_1\Vdash\dot a_1\cap O=0$. By the genericity of the filter $g$, there has to be a condition $r'_0\leq r_0$ in $g$ such that $r'_0\Vdash\theta\in O$. Then, the condition $\langle r'_0, r'_1\rangle$ forces in the product that $\theta\notin \dot a_1$ holds, contradicting the initial assumptions.
\end{proof}

\noindent It follows immediately from the claim that the product forces $\langle \dot a_0\cup \dot a_1, \dot b_0\cup\dot b_1$ to be a common lower bound of $\gs_0, \gs_1$. Item (1) follows.

For (2), let $\theta$ be the $Q$-name for the first coordinate of $\gs$, and let $a=\{x\in [0, 1]\colon Q\Vdash\check x\in \theta$. I will show that $a\subset [0, 1]$ is closed and the balanced pairs $\langle Q, \gs\rangle$ and $\langle \coll(\gw, [0,1]), \tau_a\rangle$ are equivalent. It is immediate that $a\subset [0, 1]$ is closed and nowhere dense, since $Q\Vdash\theta$ is closed and nowhere dense. To conclude the proof, by \cite[Proposition 5.2.6]{z:geometric} it is enough to show that $Q\Vdash a\subset\theta$ and $([0, 1]\cap V\setminus a)\cap \theta=0$ because then $\gs$ and $\langle a, [0, 1]\cap V\setminus a\rangle$ are forced to be compatible conditions in $P$.

To show that $Q\Vdash a\subset\theta$ holds, suppose that some condition $q$ forces the contrary. Then there has to be a basic open set $O\subset [0, 1]$ such that $a\cap O\neq 0$ and $q\Vdash\theta\cap O=0$. Pick a point $x\in a\cap O\cap V$ and use the definition of the set $a$ to conclude that $Q\Vdash\check x\in\theta$. This immediately contradicts the assumption that  $q\Vdash\theta\cap O=0$.

To show that $Q\Vdash ([0, 1]\cap V\setminus a)\cap\theta=0$ holds, suppose towards a contradiction that there is a point $x\in V\setminus a$ and a condition $q_0\in Q$ forcing $\check x\in\theta$. By the definition of the set $a$, there has to be a condition $q_1\in Q$ forcing $\check x\notin\theta$. Let $H_0, H_1\subset Q$ be mutually generic filters containing the conditions $q_0, q_1$ respectively, and let $p_0=\gs/H_0$ and let $p_1$ be the condition obtained from $\gs/H_1$ by adding the point $x$ to its second coordinate. It is clear that $p_0, p_1$ are conditions incompatible in $P$ as witnessed by the point $x$. This contradicts the initial balance assumption on the pair $\langle Q, \gs\rangle$.

(3) is immediate. To get the last sentence, note that for every condition $p=\langle a_p, b_p\rangle$, the pair $\langle \coll(\gw, [0,1]), \tau_a\rangle$ represents a balanced virtual condition stronger than $p$.
\end{proof}

\begin{corollary}
Let $W$ be the choiceless Solovay model. Then $W\models P\Vdash \dot A\subset\mathbb{R}$ is sequentially closed.
\end{corollary}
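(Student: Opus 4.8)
The plan is to argue by contradiction, using the classification of balanced virtual conditions from Theorem~\ref{balancetheorem}. Suppose $\dot A$ is not forced to be sequentially closed. Then there are a condition $p_0\in P$ and $P$-names $\dot s,\dot y$ such that $p_0$ forces $\dot s\colon\gw\to\dot A$, forces that $\dot s$ converges to the point $\dot y\in [0,1]$, and forces $\dot y\notin\dot A$. Since $P$ is balanced, I would pass to a balanced virtual condition below $p_0$; by Theorem~\ref{balancetheorem}(2) this condition is equivalent to $\tau_a$ for some nowhere dense closed $a\subset [0,1]$, so $\tau_a$ forces the same three statements about $\dot s$ and $\dot y$.

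The next step is to pin down the sequence. The input from geometric set theory is the basic feature of balanced forcing over the Solovay model (see \cite{z:geometric}) that a balanced virtual condition decides every $P$-name for an element of a Polish space. Applying this to the names $\dot s(n)$, $n\in\gw$ (elements of $[0,1]$), I obtain reals $x_n\in [0,1]$ lying in $W$ with $\tau_a\Vdash\dot s(n)=\check x_n$; since each $x_n$ is the unique such real, the map $n\mapsto x_n$ is definable in $W$, so the sequence $\langle x_n\colon n\in\gw\rangle$ belongs to $W$. As $\tau_a$ forces $\dot s$ to converge and its evaluations of $\dot s$ are pinned, the $W$-sequence $\langle x_n\rangle$ is Cauchy; its limit $y=\lim_n x_n$, computed in $W$, is a real in $[0,1]\cap W$, and by absoluteness $\tau_a\Vdash\dot y=\check y$.

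The contradiction now comes from the explicit description $\tau_a=\langle a,\ ([0,1]\cap V)\setminus a\rangle$. Any old real $z\notin a$ lies in the second coordinate of $\tau_a$, hence is kept out of every first coordinate that can appear in the generic filter below $\tau_a$; so $\tau_a\Vdash\check z\notin\dot A$. Since each $x_n$ is an old real and $\tau_a\Vdash\check x_n\in\dot A$, this is possible only if $x_n\in a$, so $x_n\in a$ for every $n$. But $a$ is closed, so $y=\lim_n x_n\in a$; and $a$ is the first coordinate of $\tau_a$, so $\tau_a\Vdash\check y\in\dot A$. Together with $\tau_a\Vdash\dot y=\check y$ and $\tau_a\Vdash\dot y\notin\dot A$, this makes $\tau_a$ force a contradiction, which is impossible for a genuine virtual condition.

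I expect the only real difficulty to lie in the pinning step: one needs the theorem that balanced virtual conditions in the Solovay extension decide names for elements of Polish spaces, and one must be careful about the meaning of ``$\tau_a\Vdash\varphi$'', which refers to the $\coll(\gw,[0,1])$-name $\tau_a$ evaluated in the relevant ground model of ZFC rather than to an actual condition of $P$, since the second coordinate $([0,1]\cap V)\setminus a$ is uncountable there. Once the reals $x_n$ and $y$ are correctly located in $W$ and recognized as old reals, the combinatorics of $P$ invoked here is trivial: an old real outside $a$ is permanently barred from $\dot A$ below $\tau_a$, while every point of $a$ is permanently forced into it.
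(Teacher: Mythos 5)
Your argument is correct and ends in the same contradiction as the paper's, but the two proofs are packaged differently. The paper never touches virtual conditions in this corollary: it cites the general fact (Theorem 9.1.1 of \cite{z:geometric}, applicable because $P$ is balanced) that the $P$-extension of $W$ adds no new reals and no new $\gw$-sequences of reals, so the offending sequence $x$ and its limit $y$ already lie in $W$ and some \emph{actual} condition $p\in P$ forces $\rng(\check x)\subset\dot A$ and $\check y\notin\dot A$. The trap is then sprung with $p$ itself: if some term of the sequence lies outside $a_p$, adding it to $b_p$ yields $q\leq p$ forcing $\rng(\check x)\not\subset\dot A$; otherwise $\rng(x)\subset a_p$, so $y\in a_p$ by closedness and $p\Vdash\check y\in\dot A$. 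Your route instead descends to the balanced virtual condition $\tau_a$ and uses its rigidity (every old real outside $a$ sits in the second coordinate, hence is permanently barred from $\dot A$). This is sound, but it needs two extra inputs the paper avoids: the pinning of the names $\dot s(n)$ below a balanced virtual condition, and the fact that the pinned values $x_n$ actually belong to the intermediate ZFC model $V[K]$ over which $\tau_a$ is computed --- otherwise they are not ``old reals'' and need not appear in the second coordinate of $\tau_a$ (one either proves $x_n\in V[K]$ via homogeneity of the collapse, or recomputes $\tau_a$ over a larger intermediate model containing $x_n$ and uses equivalence of the resulting virtual conditions). You flag exactly this point in your closing caveat, and it is the one place your write-up would need to be filled in; everything else is a faithful, if slightly heavier, variant of the paper's argument.
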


\begin{proof}
Suppose towards a contradiction that this fails. Recall that by \cite[Theorem 9.1.1]{z:geometric} and the fact that the poset $P$ is balanced in ZFC, the $P$-extension of the model $W$ contains no new reals and no new $\gw$-sequences of reals. Thus, it must be the case that there is a condition $p\in P$ and an $\gw$-sequence $x$ for elements of $[0, 1]$ converging to some $y$ such that $p\Vdash \rng(\check x)\subset\dot A$ and $\check y\notin \dot A$. The contradiction is achieved in two complementary cases. First, assume that $\rng(x)\subset a_p$. Then, by the closure of $a_p$, it is also true that $y\in a_p$ and $p\Vdash\check y\in\dot A$, contradicting the original assumptions. Second, assume that $\rng(x)\setminus a_p$ is nonempty, containing some point $z$. Consider the condition $q\leq p$ obtained from $p$ by adding $z$ to its second coordinate. Then $q\Vdash\rng(\check x)\not\subset\dot A$, again contradicting the initial assumptions.
\end{proof}

\noindent As pointed out above, $\dot A$ is forced to be dense codense in $[0, 1]$, so not closed. Thus, in the $P$-extension of the choiceless Solovay model, $\mathbb{R}$ is not a sequential space. All that remains to be proved is that in that extension, every infinite set of reals contains a countably infinite subset. This is in fact the main contribution of this paper. Theorem~\ref{1theorem} is an immediate corollary.

\begin{theorem}
Let $W$ be the choiceless Solovay model. Then $W\models P\Vdash$ every infinite set of reals has a countably infinite subset.
\end{theorem}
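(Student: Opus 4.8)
The plan is to argue in $W$, following the template of geometric set theory \cite{z:geometric}, that $P$ forces the statement. Suppose towards a contradiction that some condition $p_0\in P$ forces that $\dot A$ aside, $\dot C$ is an infinite set of reals with no countably infinite subset. Because $W$ is a choiceless Solovay model, every set of reals in its $P$-extension is definable from the generic $\dot A$, a real parameter $z$ in $W$, and finitely many ordinals; so without loss of generality $\dot C$ is such a name. The heart of the matter is to show, using balance, that $\dot C$ depends on $\dot A$ only pointwise: there are sets of reals $C^+,C^-$ lying in $W$ such that $P$ forces $\dot C=(C^+\cap\dot A)\cup(C^-\setminus\dot A)$, where $C^+$ is, informally, the set of reals that get thrown into $\dot C$ once they are thrown into $\dot A$, and $C^-$ the set of reals that get thrown into $\dot C$ once they are kept out of $\dot A$. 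That membership of a real $x$ in $\dot C$ is decided by the single bit ``$x\in\dot A$'' (given the fixed parameters $z$ and the ordinals) follows from the balance of the virtual conditions of Theorem~\ref{balancetheorem}: two conditions that agree on whether $x\in\dot A$ but differ elsewhere can be realized by filters mutually generic below a common balanced virtual condition and then amalgamated, so they cannot force opposite answers to ``$x\in\dot C$''; a companion argument using the homogeneity of the collapse $\coll(\gw,[0,1])$ shows that the coding sets $C^+,C^-$ themselves belong to $W$. Since $P$ forces that every real is put by the generic either into $\dot A$ or into a second coordinate, the decomposition is exhaustive.

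Granting this, strengthen $p_0$ so that it decides which of $C^+\cap\dot A$, $C^-\setminus\dot A$ is forced to be infinite; say it is $C^+\cap\dot A$ (the case of $C^-$ is symmetric, and in fact easier, since second coordinates of conditions are merely countable sets with no further constraint). If $C^+$ is countable, then $C^+\cap\dot A$ is already a countably infinite subset of $\dot C$ in the extension, which is the desired contradiction. If $C^+$ is uncountable, I claim that the set of conditions $q\le p_0$ for which there is a countably infinite closed nowhere dense $K\subseteq C^+$ with $K\cap b_q=\emptyset$ and $K\subseteq a_q$ is dense below $p_0$. Indeed, given $q\le p_0$ the set $C^+\setminus b_q$ is uncountable, since $b_q$ is countable; as $W$ satisfies ZF together with DC, an uncountable set of reals is not a discrete subspace of the line and hence contains a convergent sequence together with its limit, giving such a closed nowhere dense set $K\subseteq C^+\setminus b_q$; then $\langle a_q\cup K,b_q\rangle$ is a legitimate condition below $q$ forcing $K\subseteq\dot A$, hence $K\subseteq C^+\cap\dot A\subseteq\dot C$. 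A condition of this dense set lying in the generic filter then exhibits a countably infinite subset of $\dot C$, contradicting the choice of $p_0$. In the symmetric case one first notes that $C^-\setminus a_q$ is infinite for every $q\le p_0$ — otherwise $q$ would force $C^-\setminus\dot A$ to be finite — and then appends a countably infinite subset of $C^-\setminus a_q$ to the second coordinate of $q$.

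The main obstacle is the structural claim of the first paragraph: extracting from balance the pointwise decomposition $\dot C=(C^+\cap\dot A)\cup(C^-\setminus\dot A)$ with $C^+,C^-\in W$. Once this is secured, what remains is the elementary density computation above, which rides entirely on the very liberal extension structure of $P$ — one may always adjoin a closed nowhere dense set to the first coordinate of a condition, or a countable set to the second, subject only to disjointness from the other coordinate.
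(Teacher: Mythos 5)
The central structural claim of your first paragraph --- that $P\Vdash \dot C=(C^+\cap\dot A)\cup(C^-\setminus\dot A)$ for fixed sets $C^+,C^-\in W$, i.e.\ that membership of a real $x$ in $\dot C$ is decided by the single bit ``$x\in\dot A$'' --- is false in general, and the balance argument you sketch for it does not work. By Theorem~\ref{balancetheorem} the balanced virtual conditions are classified by closed nowhere dense sets $a\subset[0,1]$, and two conditions that agree on the bit ``$x\in\dot A$'' need not lie below a common balanced virtual condition: for instance $p=\langle\{x,y\},\emptyset\rangle$ and $q=\langle\{x\},\{y\}\rangle$ both put $x$ into $\dot A$, yet they are incompatible and lie below virtual conditions $\tau_a$ with different sets $a$, so no amalgamation below a common balanced pair is available. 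Concretely, for a name such as $\dot C=\{x\in[0,1]\colon x\in\dot A\wedge 1-x\in\dot A\}$, membership of $x\neq 1/2$ depends on two independent generic bits: below any condition not yet mentioning $x$ and $1-x$ one can force $x\in\dot A$ together with either $1-x\in\dot A$ or $1-x\notin\dot A$, so no set $C^+$ of the kind you describe exists. What balance actually yields is that ``$x\in\dot C$'' is decided by each balanced virtual condition $\tau_a$ (over a model containing the parameters), but that decision genuinely depends on the entire set $a$, not merely on whether $x\in a$. Since your argument derives its contradiction from this decomposition, the gap is fatal; the density computation in your second paragraph is fine in itself but never gets off the ground.

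For comparison, the paper avoids any structural analysis of the name. Working in an intermediate extension $V[K]$ capturing the parameters, either the balanced virtual condition $\bar p$ forces $\tau\subset V[K]$, in which case $\tau$ is countable in $W[G]$, or there are a small poset $Q$ and $Q$-names $\gs$ (for a condition below $\bar p$) and $\theta$ (for a real outside $V[K]$) with $\gs\Vdash_P\theta\in\tau$. One then chooses filters $H_n\subset Q$ ($n\in\gw$) pairwise mutually generic and converging to a generic filter $H_\gw$ in $\power(Q)$: balance of $\bar p$ makes the conditions $\gs/H_n$ pairwise compatible, the convergence $\lim_nH_n=H_\gw$ makes the union of their first coordinates closed and nowhere dense, and mutual genericity makes the reals $\theta/H_n$ pairwise distinct. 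The resulting common lower bound forces $\{\theta/H_n\colon n\in\gw\}$ to be a countably infinite subset of $\tau$. This ``limit of mutually generic filters'' device is the idea your proposal is missing.
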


\begin{proof}
Let $\kappa$ be an inaccessible cardinal, and let $W$ be the choiceless Solovay model derived from $\kappa$. Work in the model $W$. Let $\tau$ be a $P$-name for an infinite set of reals, and let $p\in P$ be a condition. I have to produce an injective function $\pi\colon \gw\to\mathbb{R}$ and a strengthening of the condition $p$ which forces $\rng(\pi)\subset\tau$. To this end, pick a parameter $z\in\cantor$ such that $\tau, p$ are both definable from $z$ and some additional parameters in the ground model. Let $V[K]$ be an intermediate generic extension obtained by a poset of cardinality smaller than $\kappa$ such that $z\in V[K]$. Work in $V[K]$.

Let $\bar p$ be the balanced virtual condition in $P$ associated with the set $a_p$ as in Theorem~\ref{balancetheorem}. If in the model $W$, $\bar p\Vdash\tau\subset V[K]$ holds, then $\bar p\Vdash\tau$ is countable, and the proof is complete. If this fails, then there must be a poset $Q$ of cardinality smaller than $\kappa$, a $Q$-name $\gs$ for a condition stronger than $\bar p$ and $\theta$ and a $Q$-name $\theta$ for a real not in $V[K]$ such that $Q\Vdash\coll(\gw,  <\kappa)\Vdash\gs\Vdash_P\theta\in\tau$. Move back to $W$.

First, let $H_\gw\subset Q$ be a filter generic over $V[K]$. Then, let $\langle H_n\colon n\in\gw\rangle$ be a sequence of filters on $Q$ which are pairwise mutually generic over $V[K][H_\gw]$ and such that $\lim_nH_n=H_\gw$ in the usual topology of $\power(Q)$. This means that for every condition $q\in H_\gw$, for all but finitely many $n\in\gw$ $q\in H_n$ holds. Write $p_n=\gs/H_n$, $p_n=\langle a_n, b_n\rangle$, and $x_n=\theta/H_n$ for every index $n\in\gw$. It will be enough to show that the points $x_n$ for $n\in\gw$ are pairwise distinct and the conditions $p_n$ for $n\in\gw$ have a common lower bound in $P$, because then in the model $W$, that lower bound forces $\{x_n\colon n\in\gw\}\subset\tau$ as desired.

First of all, it is clear that the points $x_n$ for $n\in\gw$  are pairwise distinct, since if $n\neq m$ then $x_n\in V[K][H_n]\setminus V[K]$ and $x_m\in V[K][H_m]\setminus V[K]$, and the models $V[K][H_n], V[K][H_m]$ are mutually generic over $V[K]$. To see why the conditions $p_n$ for $n\in\gw$ have a common lower bound is more difficult. Write $p_\gw=\gs/H_\gw$, $p_\gw=\langle a_\gw, b_\gw\rangle$.

\begin{claim}
$c=a_\gw\cup\bigcup_{n\in\gw}a_n$ is a closed nowhere dense subset of $[0, 1]$.
\end{claim}

\begin{proof}
For the closure I use the fact that $\lim_nH_n=H_\gw$. Suppose that $y\notin c$ is a point. Since $a_\gw$ is a closed set, there must be a basic open neighborhood $O_\gw$ of $y$ which is disjoint from $a_\gw$. There must be a condition $q\in H_\gw$ such that $q\Vdash O_\gw$ is disjoint from the first coordinate of the condition $\gs\in P$. Find a number $n\in\gw$ such that for all $m>n$, $q\in H_m$ holds. Then for all $m>n$, $a_m\cap O_\gw=0$. For each index $m\leq n$, the set $a_m$ is closed and does not contain the point $y$, so there is a basic open neighborhood $O_m$ of $y$ disjoint from $a_m$. Clearly, $\bigcap_{m\leq n}O_m\cap O_\gw$ is an open neighborhood of $y$ disjoint from the set $c$.

To see that $c\subset [0,1]$ is nowhere dense, note that the set $[0,1]\cap V[K]\setminus a$ is dense and the set $c$ contains none of its points. Alternately, $c$ is a union of countably many closed nowhere dense sets, therefore meager and (as a closed set) nowhere dense.
\end{proof}

\noindent Now, by the balance of the virtual condition $\bar p$, for all distinct indices $n, m\leq\gw$ it is the case that $p_n, p_m$ are compatible in $P$. This is to say that $a_n\cap b_m=0$ and $a_m\cap b_n=0$. It follows that $\bigcup_{n\leq\gw}a_n$ and $\bigcup_{n\in\gw}b_n$ are disjoint sets; they together form the requested common lower bound of all conditions $p_n$ for $n\in\gw$ or even $n\leq\gw$.
\end{proof}

\bibliographystyle{plain}
\bibliography{odkazy,zapletal}

\end{document}